\documentclass[11pt, letterpaper]{amsart}
\usepackage[left=1in,right=1in,bottom=0.5in,top=0.8in]{geometry}
\usepackage{amsfonts}
\usepackage{amsmath, amssymb}
\usepackage{graphicx}
\usepackage[font=small,labelfont=bf]{caption}
\usepackage{epstopdf}
\usepackage[pdfpagelabels,hyperindex]{hyperref}
\usepackage{xcolor}
\usepackage{amsthm}
\usepackage{float}
\usepackage{pgfplots}
\usepackage{listings}
\usepackage{longtable}
\usepackage{mathrsfs}

\newtheorem{theorem}{Theorem}[section]

\newtheorem{lemma}[theorem]{Lemma}

\theoremstyle{definition}
\newtheorem{definition}[theorem]{Definition}

\theoremstyle{remark}

\definecolor{energy}{RGB}{114,0,172}
\definecolor{freq}{RGB}{45,177,93}
\definecolor{spin}{RGB}{251,0,29}
\definecolor{signal}{RGB}{203,23,206}
\definecolor{circle}{RGB}{217,86,16}
\definecolor{average}{RGB}{203,23,206}

\colorlet{shadecolor}{gray!20}
\pgfplotsset{compat=1.9}

\makeatletter
\let\c@equation\c@theorem
\makeatother
\numberwithin{equation}{section}

\title{\textbf{4-cycle-free induced subgraphs of grid graphs}}

\author{Taiki Aiba \and Ernie Croot} 

\date{}

\usepackage{fancyhdr}
\begin{document}

\begin{abstract}
    \noindent The avoidance of induced forests, or induced acyclic subgraphs, in $d$-dimensional grid graphs, or lattice graphs, has been studied in Alon \textit{et al.} \cite{alon} and later in Caragiannis \textit{et al.} \cite{caragiannis}, finding upper and lower bounds with respect to the number of vertices in a single dimension $n$ and the dimension $d$. In this work, we study the avoidance of induced $C_4$-free subgraphs, a superset of induced forests, of $2$-dimensional grid graphs $G$ and characterize maximum sets $S \subseteq V$ such that the induced subgraph $G_S$ of $G$ with vertex set $S$ is $C_4$-free. Additionally, we will give upper and lower bounds on the number of $C_4$-free induced subgraphs with slightly fewer vertices than contained in the maximum.
\end{abstract}

\maketitle

\section{Introduction}

In not only extremal combinatorics but also in additive combinatorics and discrete analysis, one studies subsets of a larger set that avoid given patterns. In this paper, we do this for a particular set of patterns in grid graphs, also known as lattice graphs.

\begin{definition}
    The \textit{$2$-dimensional grid graph} has vertices $[n]^2 := \{1,\ldots,n\}^2$, where the edges are those connecting $(x,y)$ to $(z,w)$ if and only if $|x-z| + |y-w| = 1$. 
\end{definition} A $4\times 4$ example is given as follows:

\bigskip

\begin{figure}[H]
\centering
\begin{tikzpicture}[
    dot/.style={circle, fill=black, inner sep=2pt} 
]

\draw (0,0) grid (3,3);

\foreach \x in {0,1,2,3} {
    \foreach \y in {0,1,2,3} {
        \node[dot] at (\x,\y) {}; 
    }
}
\end{tikzpicture}
\end{figure}

\bigskip

\begin{definition}Suppose $S$ is a subset of the vertices of a $2$-dimensional grid graph $G$. Then the \textit{induced subgraph} of $G$, denoted by $G_S$, is a subgraph of $G$ with vertex set $S$, and $uv$ is an edge in $G_S$ if and only if $uv$ is an edge in $G$ and $u,v \in S$.
\end{definition}
For example, if $S$ comprises the vertices
$(1,1)$, $(2,1)$, $(3,1)$, $(4,1)$, 
$(2,2)$, $(2,3)$, $(3,3)$, $(4,3)$, 
$(1,4)$, and $(2,4)$, then the induced subgraph will
be the following:
\bigskip

\begin{figure}[H]
\centering
\begin{tikzpicture}

\node[circle, fill=black, inner sep=2pt] (A) at (0,-1) {};
\node[circle, fill=black, inner sep=2pt] (B) at (0,1) {};
\node[circle, fill=black, inner sep=2pt] (C) at (1,1) {};
\node[circle, fill=black, inner sep=2pt] (D) at (0,2) {};
\node[circle, fill=black, inner sep=2pt] (E) at (-1,2) {};
\node[circle, fill=black, inner sep=2pt] (F) at (2,1) {};
\node[circle, fill=black, inner sep=2pt] (G) at (1,-1) {};
\node[circle, fill=black, inner sep=2pt] (H) at (2,-1) {};
\node[circle, fill=black, inner sep=2pt] (I) at (-1,-1) {};
\node[circle, fill=black, inner sep=2pt] (J) at (0,0){};

\draw (A) -- (B);
\draw (B) -- (C);
\draw (B) -- (D);
\draw (C) -- (F);
\draw (D) -- (E);
\draw (A) -- (G);
\draw (G) -- (H);
\draw (A) -- (B);
\draw (I) -- (A);
\draw (J) -- (A);

\end{tikzpicture}
\end{figure}
\newpage

We begin identifying the size (number of vertices) of the largest 4-cycle-free induced subgraphs of grid graphs with the following lemma.

\begin{lemma}\label{2x2lemma} Let $n$ be even. Then the largest subset $S\subseteq V$ such
that $G_S$ is $4$-cycle-free has size $3(n/2)^2$.
\end{lemma}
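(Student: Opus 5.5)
The proof has two parts: an upper bound obtained by partitioning the grid into disjoint $2\times 2$ blocks, and a matching construction.

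For the upper bound, write $n=2m$ and partition $[n]^2$ into the $m^2$ blocks
\[
B_{i,j}=\{2i-1,2i\}\times\{2j-1,2j\},\qquad 1\le i,j\le m .
\]
The four vertices of each $B_{i,j}$ form a $4$-cycle in $G$. If $S$ contained all four of them, these vertices would induce that $4$-cycle in $G_S$. So $|S\cap B_{i,j}|\le 3$ for every block, and summing over the $m^2$ blocks gives $|S|\le 3m^2=3(n/2)^2$. (Every $4$-cycle in the grid graph is a unit square, since a closed walk of length $4$ has to use one step in each of $\pm x$ and $\pm y$. We do not need this fact here, but it is what makes the construction below easy to check.)

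For the lower bound, we need a set of size $3m^2$ that misses no unit square entirely, i.e.\ meets every one of the $(n-1)^2$ unit squares $\{a,a+1\}\times\{b,b+1\}$ in at most three points. Each block can contain at most one removed point, so we must remove exactly one point per block, $m^2$ points in all, and still hit every unit square, including those that straddle block boundaries. The natural first choice is the periodic pattern in which we remove the lower-left corner $(2i-1,2j-1)$ of every block, i.e.\ all points with both coordinates odd. I would then check this choice: a unit square $\{a,a+1\}\times\{b,b+1\}$ contains exactly one value of each parity among $a,a+1$, and likewise among $b,b+1$, so it contains exactly one point with both coordinates odd. Hence every unit square loses exactly one vertex and contains no induced $C_4$.

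Since every $4$-cycle in the grid is a unit square, $G_S$ is $4$-cycle-free, and $|S|=n^2-m^2=3m^2$. The only point that needs care is the claim that all $4$-cycles are unit squares. This follows because the grid is bipartite and planar with unit faces: a $4$-cycle uses two horizontal and two vertical steps, and so it bounds a single unit cell. This is the step I would write out carefully; everything else is counting.
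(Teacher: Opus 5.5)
Your proof is correct and follows the paper's argument. The same partition into $2\times 2$ blocks gives $|S|\le 3(n/2)^2$, and the extremal example is the same periodic deletion of one corner per block. The paper deletes $(2+2k,2+2\ell)$, the corner with both coordinates even, while you delete the both-odd corner, which is symmetric. Your parity count, together with the fact that every $4$-cycle of the grid is a unit square, fills in the verification the paper leaves as ``one can also see.''
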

\begin{proof}To see this, note that we can break the
vertices in the $n \times n$ grid up into a disjoint union of 
$2\times 2$ squares.  The vertices of these
squares will be of the form
$$\{(1 + 2k, 1 + 2\ell), (2 + 2k, 1 + 2\ell),
(1 + 2k, 2 + 2 \ell), (2 + 2k, 2+2 \ell)\}$$
for $k,\ell = 0,\ldots,n/2-1$.  Now, since $G_S$ is $4$-cycle-free, $S$
can contain at most $3$ of these vertices, for each such square, giving the upper bound $|S| \leq 3(n/2)^2$.  
But one can also see that if $S$ just consists of all the $3$ vertices
$$\{(1 + 2k, 1 + 2\ell), (2 + 2k, 1 + 2\ell),
(1 + 2k, 2 + 2 \ell)\},$$
then $G_S$ is $4$-cycle-free, meaning that the upper bound of $3(n/2)^2$ is
attained.
\end{proof}

In this paper we address the problem of determining how many $4$-cycle-free induced subgraphs like this there can be, given that the subgraphs are allowed to be a little smaller than this maximum upper bound $3(n/2)^2$.  The following theorem gives upper and lower bounds on this count:

\begin{theorem}\label{thm:main}
Let ${\mathcal F}$ denote the family of all subsets $S \subseteq V$ such that $G_S$ is $4$-cycle-free. For $n \geq 2$ and $\varepsilon \geq 0$, let
$$
{\mathcal F}_\varepsilon\ :=\ \{S \in {\mathcal F} : |S| \geq (3/4-\varepsilon)n^2\}.
$$
Then:
\begin{enumerate}
\item[(1)] $|{\mathcal F}_0| \leq (n/2+1)^n$, if $n$ is even; and we get the upper bound $((n+1)/2)^{n+1}$ if $n$ is odd.

\item[(2)] And for $\varepsilon > 0$ we have
$$
(c-o(1)) \varepsilon \ln(1/\varepsilon)n^2\ \leq\ \ln |{\mathcal F}_\varepsilon|\ \leq\  (\kappa \varepsilon^{1/3} (\ln   1/\varepsilon)^{2/3} + f(\varepsilon)) n^2,
$$
where $f(\varepsilon) = O(\varepsilon \ln(1/\varepsilon))$, $c = \ln(4/3)$, and where
$\kappa > 0$ is defined in (\ref{kappa_def}).
\end{enumerate}
\end{theorem}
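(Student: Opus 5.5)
For part (1), I will start from the tiling argument in Lemma~\ref{2x2lemma}. Take $n$ even and $S\in\mathcal F_0$. Every $2\times 2$ block of the partition must then contain exactly $3$ points of $S$, so each block is missing exactly one vertex. The same holds for every shifted $2\times2$ window: any $2\times 2$ square $\{x,x+1\}\times\{y,y+1\}$ contains at most $3$ points of $S$.

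The key step is to show that the missing set $M=V\setminus S$ has a rigid structure. My plan is to compare overlapping windows. A window shifted by one column meets two adjacent blocks. If one block misses its left column and the next block misses its right column, then the shifted window between them has no missing vertex, which is forbidden. Propagating this along a row of blocks should show the following. In each horizontal strip of blocks (rows $2\ell+1,2\ell+2$), the missing columns inside the blocks can switch from ``right'' to ``left'' at most once. The analogous statement should hold for the row choice. I will aim to encode $S$ by one ``switch position'' per strip. Each of the $n/2$ horizontal strips gets a switch point among $n/2+1$ positions, and each of the $n/2$ vertical strips likewise. This gives at most $(n/2+1)^{n/2}\cdot(n/2+1)^{n/2}=(n/2+1)^n$ configurations.

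For $n$ odd, $|S|\ge \tfrac34 n^2$ forces near-equality. I will embed the grid into an $(n+1)\times(n+1)$ grid, or else tile with $2\times2$ blocks plus a boundary strip, and run the same counting. This should give $((n+1)/2)^{n+1}$. The main obstacle in part (1) is proving that the windows force a single monotone switch rather than a more complicated pattern. I expect this to need a careful case analysis on which corner is missing in two adjacent blocks.

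For the lower bound in (2), I will start from the extremal configuration (missing the top-right corner of every block) and choose about $\varepsilon n^2$ blocks, spread out so that they are pairwise non-adjacent, for instance at distance at least $2$. In each chosen block I delete one further vertex. Then the $4$ options for which corner is missing in that block collapse to essentially $4$ free choices of the $3$ corners left in a $2$-vertex pattern, versus a reference pattern. Counting the choices gives $\binom{n^2/4}{\varepsilon n^2}\cdot(\text{const})^{\varepsilon n^2}$, which is $\exp((1+o(1))\varepsilon\ln(1/\varepsilon)n^2)$ up to the constant. The constant $\ln(4/3)$ should come from optimizing the per-block gain. I still need to check that no induced $C_4$ appears in shifted windows; that is why the chosen blocks are kept isolated.

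For the upper bound in (2), a set with deficit $\varepsilon n^2$ has at most $4\varepsilon n^2$ blocks that are ``defective'', meaning they contain fewer than $3$ points. I will cut the grid into $m\times m$ super-squares. Only $O(\varepsilon n^2)$ super-squares contain defects. Each defect-free super-square is governed by the rigid part (1) structure, costing about $e^{O(m\ln m)}$ choices. Each defective super-square costs at most $2^{m^2}$, or better, $\binom{m^2}{\cdot}$. Summing gives roughly
\[
\frac{n^2}{m^2}\, m\ln m+\varepsilon n^2\, m^2+\varepsilon n^2\ln(1/\varepsilon).
\]
Balancing the first two terms with $m\approx(\ln(1/\varepsilon)/\varepsilon)^{1/3}$ gives $\varepsilon^{1/3}(\ln 1/\varepsilon)^{2/3}n^2$, and $\kappa$ is the resulting constant. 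The delicate step is the boundary compatibility between super-squares. I will handle it by letting the switch data at super-square boundaries be absorbed into the $m\ln m$ term per super-square.
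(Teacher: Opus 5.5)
Your part (1) argument for even $n$ and your upper bound in (2) follow the paper. In part (1), each block gets letters L/R and U/D recording its missing vertex. The shifted window is full whenever an L-block has an R-block to its right (and similarly vertically). Hence each row has the form ${\rm R}\cdots{\rm R}{\rm L}\cdots{\rm L}$, giving the count $(n/2+1)^n$. In (2), you use $2k\times 2k$ super-squares: perfect ones cost at most $(k+1)^{2k}$ by part (1), defective ones at most $2^{4k^2}$, times a binomial factor for which ones are defective.

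The genuine gap is in your lower bound. Letting a chosen block keep its originally missing corner is unsafe. Suppose every block misses $(2i+2,2j+2)$, and a chosen block $(i,j)$ not on the right edge keeps $(2i+2,2j+1)$ and $(2i+2,2j+2)$. Together with $(2i+3,2j+1),(2i+3,2j+2)$ from the unchanged right-hand neighbour, these form an induced $C_4$. Non-adjacency of the chosen blocks does nothing to prevent this. Indeed, if the right, upper and up-right neighbours are unchanged, restoring that corner in any way completes the window $\{2i+2,2i+3\}\times\{2j+2,2j+3\}$.

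So only deletions are admissible. For deletions your isolation condition is unnecessary, since removing vertices from a $C_4$-free set never creates an induced $C_4$. It is also harmful: at most about $n^2/8$ blocks can be pairwise non-adjacent, so your construction does not exist once $\varepsilon$ exceeds roughly $1/8$.

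The paper's proof is the repaired version. It fixes one maximum $S$ and counts all $S'\subseteq S$ with $|S'|\ge(3/4-\varepsilon)n^2$, getting at least $\binom{3n^2/4}{\lfloor\varepsilon n^2\rfloor}\ge(3/(4\varepsilon))^{\varepsilon n^2-1}$ for $\varepsilon\le 3/8$. The constant $c=\ln(4/3)$ does not come from optimizing a per-block gain. It is simply a constant for which $\ln(3/(4\varepsilon))\ge c\ln(1/\varepsilon)$ holds on that range.

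Some smaller points:
\begin{itemize}
\item The monotone switch in part (1) needs no case analysis: once an adjacent LR is forbidden, a row is immediately ${\rm R}\cdots{\rm R}{\rm L}\cdots{\rm L}$.
\item In the upper bound there is no boundary compatibility to handle. $S$ is determined by its restrictions to the super-squares, and each restriction is itself $C_4$-free, so the product of the per-super-square counts already bounds the total.
\item Constants do matter for $\kappa$. The number of defective super-squares is at most $\varepsilon n^2$, not $4\varepsilon n^2$: each is at least one vertex short of $3k^2$, and the total shortfall is at most $\varepsilon n^2$. With your factor $4$, the term $4\ln 2\,\varepsilon k^2 n^2$ quadruples and the balanced constant exceeds $\kappa$. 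With the correct count and $k$ from (\ref{kchoice}), the terms $\frac{n^2}{2k}\ln(k+1)$ and $4\ln 2\,\varepsilon k^2n^2$ reproduce $\kappa$ at leading order.
\item Your odd-$n$ sketch for part (1) cannot give $((n+1)/2)^{n+1}$. Embedding would at best give $(\frac{n+1}{2}+1)^{n+1}$. It is also not an injection into maximum configurations. For odd $n$ the maximum is $n^2-((n-1)/2)^2$, about $n/2$ above $\frac34 n^2$, so members of $\mathcal F_0$ can contain interior blocks with only two points, and such sets extend to no maximum set of the larger grid. The paper gives nothing beyond its embedding remark here either, so this case is unsupported on both sides.
\end{itemize}
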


\noindent {\bf Remark:}  In our proof we will assume $n$ is even.  If $n$ is odd, then simply by embedding the $n \times n$ grid into an $(n+1) \times (n+1)$ grid, we now would have a grid with even-length sides; and, the upper bound on $\ln |{\mathcal F}_\varepsilon|$ would not be much changed (since the ``error" in replacing $n^2$ by $(n+1)^2$ can be absorbed in the $f(\varepsilon)$ term).  Also, our $n\times n$ grid {\it contains} an $(n-1) \times (n-1)$ subgrid; and then our lower bound for $\ln |{\mathcal F}_{\varepsilon}|$ would similarly not change (the ``error" would be absorbed into the $o(1)$).

\subsection{Literature review}

The main motivation for Theorem \ref{thm:main} is based on studies on the avoidance of induced forests, or induced acyclic subgraphs, in $d$-dimensional grid graphs, $G^{(d)}$. Notably, upper and lower bounds have been discovered on the size of the largest possible set $S \subseteq V$ such that the induced subgraph, $G_S^{(d)}$, of $G^{(d)}$ with vertex set $S$ is cycle-free. The question of determining the size of maximum induced forests given degree constraints has been considered in the work of Alon, Mubayi, and Thomas \cite{alon}. Among other things, they show in their paper that if $G$ is a graph where
the maximal degree of vertices is $\Delta \geq 3$, and if $G$ has $N$ vertices and independence number $\alpha(G)$, then the size of the largest induced forest in $G$ has size at least
$$
\alpha(G) + \frac{N - \alpha(G)}{(\Delta - 1)^2}.
$$ In the case where this graph is $G^{(d)}$, it can be shown that $\alpha(G^{(d)})$ is $(1/2 + o(1))n^d$. Thus, by the theorem of Alon {\it et al}, using $\Delta = 2d$, 
we have that any largest induced forest $S$ in $G^{(d)}$ satisfies
$$
|S|\ \geq\ n^d \left ( \frac{1}{2} + \frac{1}{2(2d-1)^2} 
+ o(1) \right ).
$$

Later, Caragiannis, Kaklamanis, and Kanellopoulos \cite{caragiannis} found upper and lower bounds for the size of maximum induced forests for $d$-dimensional grid graphs to be $\frac{(d-1)n^d}{2d-1}$ and $\frac{(d-1)n^d-dn^{d-1}+1}{2d-1}$, respectively.  This establishes the asymptotic order of the size of maximum induced forests in such graphs, which is $\frac{(d-1-o(1))n^d}{2d-1}$.

In this work, instead of focusing on induced forests, which are $k$-cycle-free induced subgraphs for all $k \geq 3$, we focus on the simpler but related problem of maximum $4$-cycle-free induced subgraphs of grid graphs, motivated by the fact that these are the ``simplest'' types of cycles in a grid graph and that other problems on $4$-cycle avoidance and graph counting have been studied.

Our problem is related to classical extremal questions on $C_4$-free graphs. Notably, upper bounds for the Zarankiewicz problem were discovered by K\'ovari, S\'os, and Tur\'an \cite{kst} and refined for $K_{2,2}=C_4$ by Reiman \cite{reiman} and F\"uredi \cite{furedi}. Although those results are in the non-induced setting and for general bipartite graphs, they provide a useful context for the scale and difficulty of forbidding $4$-cycles.

Counting $C_4$-free graphs more generally has a long history, including the classical work of Kleitman and Winston \cite{kleitmanwinston}. More recent container-method results due to Balogh, Morris, and Samotij \cite{baloghmorrissamotij}, and independently Saxton and Thomason \cite{saxtonthomason}, provide a general framework that underlies many modern counting arguments for sparse forbidden configurations.

\subsection{Research problems}

Some problems worthy of consideration include:

\begin{enumerate}
\item[(a)] What is the largest subset of $S$ 
of $[n]^2$ such that $H$ is $4$-cycle-free?

\item[(b)] How many largest $4$-cycle-free
induced subgraphs are there?  That is, we are counting the number of induced $4$-cycle-free subgraphs with the greatest number of vertices, in total.

\item[(c)] How many maximal $4$-cycle-free induced subgraphs are there?  Here, a subset $S$ of vertices is said to correspond to a {\bf maximal}, $4$-cycle-free induced subgraph if any addition of vertices to $S$ results in an induced subgraph containing a $4$-cycle.  Note that this is a different concept from {\bf largest} $4$-cycle-free induced subgraph.

\item[(d)] How many $4$-cycle-free induced subgraphs $H$ are there?
\end{enumerate}

Question (a) is answered by Lemma \ref{2x2lemma}. The next two sections prove parts (1) and (2) of Theorem \ref{thm:main}, respectively.

\subsection{Paper outline}

The remainder of the paper is organized as follows:
\begin{enumerate}
\item[(a)] In Section \ref{thm:main1}, we will classify all the largest subsets $S \subseteq V$ (that is, where $|S|$ is largest) such that $G_S$ is $4$-cycle-free.  This, then, gives (1) in Theorem \ref{thm:main}.

\item[(b)] Let $M$ denote $|S|$ for any such largest set $S \subseteq V$.  Then, if we let $I = I(n,\varepsilon)$ denote the total number of subsets $S' \subseteq V$ where $|S'| \geq (1-\varepsilon) M$
and $G_{S'}$ is $4$-cycle-free, we
give upper and lower bounds on the size of $I$.  This is done in Section \ref{thm:main2}, and proves (2) in Theorem \ref{thm:main}.
\end{enumerate}


\section{Proof of Theorem \ref{thm:main}(1)} \label{thm:main1}

Assume $n$ is even.  If $S \subseteq V$ such that $G_S$ is $4$-cycle-free, then for each $i,j=0,\ldots,m-1$, 
$S$ can contain at most $3$ elements from
\begin{equation}\label{2x2}
\{ (2i+1, 2j+1), (2i+2,2j+1),
(2i+1,2j+2), (2i+2, 2j+2)\}.
\end{equation}
But then from Lemma \ref{2x2lemma}
if $S$ is such a set of maximum size, then $S$ must contain {\it exactly} 3 vertices in each such $2 \times 2$ square.  However, not every choice of $3$ vertices in each such $2\times 2$ square leads to a set $S$ that is $4$-cycle-free (because there can be $4$-cycles in the $2\times 2$ squares shifted by $1$ in the horizontal or vertical direction).

\subsection{Letter patterns} We associate to $S$, $|S| = 3n^2/4$, two different $m \times m$
arrays $A$ and $B$ of letters.  Every position in 
the first array is assigned one of the letters $\rm L$ and $\rm R$, and
every position in the second array will have 
letters $\rm U$ and $\rm D$.  Here are the rules for 
generating these letter patterns:  

\begin{itemize}

\item If either $(2i+1,2j+1)$ or $(2i+1, 2j+2)$
is missing from $S$, then the letter in the
$(i,j)$ position of $A$ will be $\rm L$.
\item If either of $(2i+2,2j+1)$ or $(2i+2,2j+2)$ is 
missing from $S$, then the letter in the
$(i,j)$ position of $A$ will be $\rm R$.
\item If either $(2i+1,2j+1)$ or $(2i+2,2j+1)$
is missing from $S$, then the letter in the
$(i,j)$ position of $B$ will be $\rm D$.
\item If either of $(2i+1,2j+2)$ or $(2i+2,2j+2)$ is
missing from $S$, then the letter in the
$(i,j)$ position of $B$ will be $\rm U$.

\end{itemize}

\newpage

As an example, suppose $S$ is the $4 \times 4$
set $S$ with elements $(1,1)$, $(2,1)$, $(3,1)$,
$(4,1)$, $(1,2)$, $(3,2)$, $(3,1)$, $(3,2)$, 
$(3,3)$, $(3,4)$, $(4,2)$, and $(4,4)$.  The 
induced subgraph $G_S$ would look like this:

\begin{figure}[H]
\centering
\begin{tikzpicture}

\node[circle, fill=black, inner sep=2pt] (A) at (1,1) {};
\node[circle, fill=black, inner sep=2pt] (B) at (2,1) {};
\node[circle, fill=black, inner sep=2pt] (C) at (3,1) {};
\node[circle, fill=black, inner sep=2pt] (D) at (4,1) {};
\node[circle, fill=black, inner sep=2pt] (E) at (1,2) {};
\node[circle, fill=black, inner sep=2pt] (F) at (3,2) {};
\node[circle, fill=black, inner sep=2pt] (G) at (1,3) {};
\node[circle, fill=black, inner sep=2pt] (H) at (2,3) {};
\node[circle, fill=black, inner sep=2pt] (I) at (3,3) {};
\node[circle, fill=black, inner sep=2pt] (J) at (4,3) {};
\node[circle, fill=black, inner sep=2pt] (K) at (2,4) {};
\node[circle, fill=black, inner sep=2pt] (L) at (4,4) {};

\draw (A) -- (B);
\draw (B) -- (C);
\draw (C) -- (D);

\draw (A) -- (E);
\draw (C) -- (F);

\draw (E) -- (G);
\draw (F) -- (I);

\draw (G) -- (H);
\draw (H) -- (I);
\draw (I) -- (J);

\draw (H) -- (K);
\draw (J) -- (L);

\end{tikzpicture}
\end{figure}

The two arrays $A$ and $B$ that this corresponds to will be
$$
\begin{array}{cc} \rm L & \rm L \\
\rm R & \rm R \end{array} \quad \text{and} \quad
\begin{array}{cc} \rm U & \rm U \\
\rm U & \rm U \end{array}.
$$

In order for $G_S$ not to contain $4$-cycles,
we need that the arrays $A$ and $B$ avoid the 
patterns
\begin{equation}\label{restrictions}
\rm LR\quad \text{and} \quad\begin{array}{c}
\rm U \\ \rm D\end{array},
\end{equation}
respectively.  The reason for this is that, 
although forbidding $S$ to contain all $4$ of
the elements in (\ref{2x2}) eliminates the 
possibility of $G_S$ containing $4$-cycles
such as 
$$
(2i+1, 2j+1) \to (2i+2,2j+1) \to 
(2i+2,2j+2) \to (2i+1, 2j+2) \to (2i+1,2j+1),
$$
we still may have $4$-cycles in-between 
$2\times 2$ blocks of that type.  For example,
after all is said and done, we could have $4$-cycles such as
$$
(2i+2,2j+1) \to (2i+3,2j+1) \to (2i+3, 2j+2) \to 
(2i+2,2j+2) \to (2i+2,2j+1),
$$
which is not of the form in (\ref{2x2}).  However, forbidding the patterns in (\ref{restrictions}) in the arrays $A$ and $B$
eliminates those extra types of cycles.

Now, we are ready to prove Theorem \ref{thm:main} (1).

\begin{proof}
If $A$ forbids the pattern $\rm LR$, then each
row of $A$ must be of the form
$$
{\rm LLL\cdots L} \quad \text{or} \quad
{\rm RRR\cdots R} \quad \text{or} \quad {\rm RRR\cdots R LLL\cdots L}.
$$
Generically, then, it must be a sequence of $\rm R$'s
followed by a sequence of $\rm L$'s.  

Clearly, there are at most $m+1$ possible strings 
in each row, meaning there are at most $(m+1)^m$ 
possible legal arrays $A$. A similar argument shows there are at most 
$(m+1)^m$
possible arrays $B$.

Thus, there are at most 
$$
(m+1)^m\cdot (m+1)^m\ =\ (n/2+1)^n
$$
possible pairs of 
legal arrays $A,B$.  And so, there are at most
this many different largest $4$-cycle-free 
induced subgraphs $G_S$, giving the bound in Theorem \ref{thm:main} (1).
\end{proof}

\subsection{Condition is not sufficient} As suggested earlier in this section, not every valid pair of arrays will result in a 4-cycle-free induced subgraph $G_S$. A counterexample for a $6 \times 6$ grid graph is given below:
$$
\begin{array}{ccc} \rm L & \rm L & \rm L \\
\rm R & \rm R & \rm R \\
\rm L & \rm L & \rm L\end{array} \quad \text{and} \quad
\begin{array}{ccc} \rm D & \rm U & \rm U \\
\rm D & \rm U & \rm U \\
\rm D & \rm U & \rm U \end{array}.
$$ \newpage

This gives the induced subgraph as the following:

\begin{figure}[H]
\centering
\begin{tikzpicture}

\node[circle, fill=black, inner sep=2pt] (B) at (1,2) {};
\node[circle, fill=black, inner sep=2pt] (C) at (1,3) {};
\node[circle, fill=black, inner sep=2pt] (D) at (1,4) {};
\node[circle, fill=black, inner sep=2pt] (F) at (1,6) {};
\node[circle, fill=black, inner sep=2pt] (G) at (2,1) {};
\node[circle, fill=black, inner sep=2pt] (H) at (2,2) {};
\node[circle, fill=black, inner sep=2pt] (J) at (2,4) {};
\node[circle, fill=black, inner sep=2pt] (K) at (2,5) {};
\node[circle, fill=black, inner sep=2pt] (L) at (2,6) {};
\node[circle, fill=black, inner sep=2pt] (M) at (3,1) {};
\node[circle, fill=black, inner sep=2pt] (O) at (3,3) {};
\node[circle, fill=black, inner sep=2pt] (P) at (3,4) {};
\node[circle, fill=black, inner sep=2pt] (Q) at (3,5) {};
\node[circle, fill=black, inner sep=2pt] (S) at (4,1) {};
\node[circle, fill=black, inner sep=2pt] (T) at (4,2) {};
\node[circle, fill=black, inner sep=2pt] (U) at (4,3) {};
\node[circle, fill=black, inner sep=2pt] (W) at (4,5) {};
\node[circle, fill=black, inner sep=2pt] (X) at (4,6) {};
\node[circle, fill=black, inner sep=2pt] (Y) at (5,1) {};
\node[circle, fill=black, inner sep=2pt] (AA) at (5,3) {};
\node[circle, fill=black, inner sep=2pt] (AB) at (5,4) {};
\node[circle, fill=black, inner sep=2pt] (AC) at (5,5) {};
\node[circle, fill=black, inner sep=2pt] (AE) at (6,1) {};
\node[circle, fill=black, inner sep=2pt] (AF) at (6,2) {};
\node[circle, fill=black, inner sep=2pt] (AG) at (6,3) {};
\node[circle, fill=black, inner sep=2pt] (AI) at (6,5) {};
\node[circle, fill=black, inner sep=2pt] (AJ) at (6,6) {};

\draw (B) -- (C);
\draw (B) -- (H);
\draw (C) -- (D);
\draw (D) -- (J);
\draw (F) -- (L);
\draw (G) -- (H);
\draw (G) -- (M);
\draw (J) -- (K);
\draw (J) -- (P);
\draw (K) -- (L);
\draw (K) -- (Q);
\draw (M) -- (S);
\draw (O) -- (P);
\draw (O) -- (U);
\draw (P) -- (Q);
\draw (Q) -- (W);
\draw (S) -- (T);
\draw (S) -- (Y);
\draw (T) -- (U);
\draw (U) -- (AA);
\draw (W) -- (X);
\draw (W) -- (AC);
\draw (Y) -- (AE);
\draw (AA) -- (AB);
\draw (AA) -- (AG);
\draw (AB) -- (AC);
\draw (AC) -- (AI);
\draw (AE) -- (AF);
\draw (AF) -- (AG);
\draw (AI) -- (AJ);

\end{tikzpicture}
\end{figure}

This induced subgraph contains a 4-cycle with elements $(2,4)$, $(2,5)$, $(3,4)$, and $(3,5)$.


\section{Proof of Theorem \ref{thm:main}(2)} \label{thm:main2}

In this section, we will count the number of induced $4$-cycle-free subgraphs having at least 
$(3/4 - \varepsilon)n^2$ vertices.  The idea is that when $\varepsilon > 0$ is small, these subgraphs are close to being the largest subgraphs we counted in the beginning of the previous section; so we need to figure out how slightly lowering the lower bound on the number of vertices affects the count of the number of $4$-cycle-free subgraphs.

\subsection{Upper bound} Suppose $H$ is an induced subgraph of the $n \times n$ grid, where $H$ has at least $(3/4 - \varepsilon)n^2$ vertices.  Let 
\begin{equation}\label{kchoice}
k\ =\ \left \lfloor \left ( {1 \over 72\cdot \ln 2}\right )^{1/3} \left ({\ln(1/\varepsilon) \over \varepsilon}\right )^{1/3} \right \rfloor, 
\end{equation}
and suppose $n$ is divisible by $2k$.
Then let $S_{u,v}$ denote the total number of vertices of $H$ that happen to belong to the $2k \times 2k$ subgrid with coordinates $(i,j)$ where 
\begin{equation}\label{ij}
2(u-1)k\ <\ i\ \leq\ 2uk \quad \text{and} \quad
2(v-1)k\ <\ j\ \leq\ 2vk.
\end{equation}
Then the total number of vertices of $H$ equals
$$
\sum_{u=1}^{n/2k} \sum_{v=1}^{n/2k} S_{u,v}\ \geq\ 
\left ( {3 \over 4} - \varepsilon \right ) n^2.
$$

Now, since $H$ is $4$-cycle-free we have that $H$
can contain at most $3 k^2$ elements in each 
of the $2k \times 2k$ subgrids.  Let $Z_0$ be
the pairs $(u,v)$ where $S_{u,v} = 3k^2$ and let 
$Z_1$ be the pairs $(u,v)$ where $S_{u,v} \leq 3k^2-1$.
Note that
$$
|Z_0| + |Z_1|\ =\ {n^2 \over 4k^2}.
$$

We have now that
\begin{eqnarray*}
{3 n^2 \over 4} - |Z_1|\ &=&\ 3k^2 |Z_0| + (3k^2 -1) |Z_1|\\
&\geq&\ \sum_{(u,v) \in Z_0} S_{u,v} + \sum_{(u,v) \in Z_1} 
S_{u,v}\\
&=&\ \sum_{u,v} S_{u,v}\\
&\geq&\ \left ( {3 \over 4}
-\varepsilon \right ) n^2.
\end{eqnarray*}So, $|Z_1|\ \leq\ \varepsilon n^2$.

Let $Z \subset \{1,\ldots,n/2k\}^2$,
which one may think of as corresponding to a collection of $2k \times 2k$ subgrids of $G$ with coordinates $(i,j)$
as given by (\ref{ij}).  Let $F_1(Z)$ be the total number of subsets of size at most $3 k^2 |Z|$ of all those sub-grids, combined.  Note that if we chose a subset any larger than this, then at least one of the $2k \times 2k$ subgrids would contain more than $3k^2$ elements, and therefore would have a $4$-cycle.  
Clearly, 
\begin{equation}\label{F1}
F_1(Z)\ =\ \sum_{j \leq 3 k^2 |Z|} {4 k^2 |Z| \choose j}
\ <\ 2^{4k^2 |Z|}.
\end{equation}
(Since the sum contains the central binomial coefficient ${4k^2 |Z| \choose 2k^2 |Z|}$, little is lost by using this upper bound.)

Also, let $F_0(Z)$ denote the total number of possible subsets of all the vertices in those $2k \times 2k$ subgrids where each $2 \times 2$ subgrid of {\it those} contains exactly $3$ vertices (and where the $2k\times 2k$ subgrid contains no $4$-cycles); so, each $2k \times 2k$ subgrid would contain $3 k^2$ vertices.  As proved in the previous section, there are at most $(k+1)^{2k}$ possibilities for each $2k \times 2k$ subgrids assuming the subgraph is $4$-cycle-free, which means that we get the bound
\begin{equation}\label{F0}
F_0(Z)\ \leq\ ((k+1)^{2k})^{|Z|}\ =\ (k+1)^{2k|Z|}.
\end{equation}

It follows that the total number of induced subgraphs
$H$ that do not contain 4-cycles is at most
$$
I\ :=\ \sum_{Z_1 \subseteq \{1,\ldots,n/2k\}^2\atop |Z_1| \leq \varepsilon n^2} F_1(Z_1) F_0(Z_0),
$$
where here $Z_0$ simply denotes the complement of $Z_1$
inside $\{1,2,\ldots,n/2k\}^2$.  Using (\ref{F1}) and (\ref{F0}) as above, we get that
\begin{eqnarray*}
|I|\ \leq\ \sum_{Z_1 \subseteq \{1,\ldots,n/2k\}^2\atop |Z_1| \leq \varepsilon n^2} 2^{4 k^2|Z_1|} (k+1)^{2k|Z_0|}\ &\leq&\ (k+1)^{n^2/2k} 2^{4 \varepsilon k^2 n^2} \sum_{Z_1 \subseteq \{1,\ldots,n/2k\}^2\atop |Z_1| \leq \varepsilon n^2} 1 \\
&\leq&\ (k+1)^{n^2/2k} 2^{4\varepsilon k^2 n^2} 
\sum_{0 \leq j \leq \varepsilon n^2} {n^2/4k^2 \choose j}.
\end{eqnarray*} 
Now, if $k^2 < 1/16\varepsilon$, then this sum would not contain a central binomial coefficient, so
$$
|I|\ \ll\ (k+1)^{n^2/2k} 2^{4\varepsilon k^2n^2}
{n^2 / 4k^2 \choose \varepsilon n^2}
\ \leq\ (k+1)^{n^2/2k} 2^{4\varepsilon k^2n^2} 
\left ( {e \over 4 \varepsilon k^2} \right )^{\varepsilon n^2}.
$$

From our choice of $k$ in
(\ref{kchoice}), this gives
$$
|I|\ \ll\ \exp \left ( (\kappa \varepsilon^{1/3} (\ln   1/\varepsilon)^{2/3} + f(\varepsilon)) n^2 \right ),
$$
where $f(\varepsilon)$ has size $O(\varepsilon \ln(1/\varepsilon))$ and can be thought of as a ``lower-order term", and where
\begin{equation}\label{kappa_def}
\kappa\ =\ 4 \cdot 3^{-4/3} \cdot (\ln 2)^{1/3}.
\end{equation}

Note that this is significantly smaller than the total number of subsets of vertices of the $n \times n$ grid, which has $2^{n^2}$ such subsets.  However, when $\varepsilon$ is sufficiently far away from $0$, then this upper bound starts growing on the order of $2^{c n^2}$.  



\subsection{Lower bound}

To prove a {\it lower} bound on the number of induced subgraphs of size at least $(3/4-\varepsilon) n^2$ vertices, we begin by letting $S \subseteq V$ be any set of vertices where $|S| = 3n^2/4$, and $G_S$ has no $4$-cycles.  And then we simply count the number of subsets $S' \subseteq S$ with $|S'| \geq (3/4 - \varepsilon)n^2$.  Like the subgraph $G_S$, the induced subgraphs $G_{S'}$ are also $4$-cycle-free.  

For $\varepsilon \leq 3/8$, the number of such $S'$ is at least 
$$
\sum_{k \leq \varepsilon n^2} 
{3n^2/4 \choose k}\ \geq\ 
{3n^2/4 \choose [\varepsilon n^2]}\ 
\geq\ \left ( {3 \over 4 \varepsilon}
\right )^{\varepsilon n^2 - 1}
\ \geq\ \exp\left ( c \varepsilon \ln(1/\varepsilon) n^2\right ),
$$
where we can take $c = \ln(4/3)$.

\end{document}